\newtheorem{theorem}{Theorem}
\newtheorem{proposition}[theorem]{Proposition}
\newtheorem{corollary}[theorem]{Corollary}
\newtheorem{remark}[theorem]{Remark}
\newcommand{\CP}{\mathbb{CP}}
\newcommand{\CC}{\mathbb{C}}
\newcommand{\ol}{\overline}
\newcommand{\lra}{\longrightarrow}
\newcommand{\lras}{\,\longrightarrow\,}
\numberwithin{equation}{section}
\numberwithin{theorem}{section}
\begin{document}
\bibliographystyle{alpha} 
\title{On pluri-half-anticanonical system of LeBrun twistor spaces}
\author{Nobuhiro Honda}
\address{Department of Mathematics, Tokyo Institute of Technology, O-okayama,  Tokyo, Japan}
\email{honda@math.titech.ac.jp}
%\author{Jeff Viaclovsky}
%\address{Department of Mathematics, University of Wisconsin, Madison, 
%WI, 53706}
%\email{jeffv@math.wisc.edu}
\thanks{The author was partially supported by the Grant-in-Aid for Young Scientists  (B), The Ministry of Education, Culture, Sports, Science and Technology, Japan. }
\begin{abstract}
In this note, we investigate pluri-half-anticanonical systems on the so called LeBrun twistor spaces. 
We determine its dimension,  the base locus, structure of  the associated rational map, and also structure of general members, in precise form. 
In particular, we show that if $n\ge 3$ and $m\ge 2$, the base locus of the system $|mK^{-1/2}|$  on  $n\CP^2$ consists of two non-singular rational curves, along which any member has singularity, and that if
we blow-up these curves, then the strict transform  of a general members of $|mK^{-1/2}|$ becomes an irreducible non-singular surface.
We also show that if $n\ge 4$ and $m\ge n-1$, then the last surface is a minimal surface of general type with vanishing irregularity.
We also show that the rational map associated to the system  $|mK^{-1/2}|$  is birational if and only if $m\ge n-1$.
\end{abstract}
%\date{February 9, 2009}
\maketitle
\setcounter{tocdepth}{1}
\vspace{-5mm}
%\tableofcontents

%%%%%%%%%%%%%%%%%%%%%%%%%%%%%%%%%%%%%%%%%%%%%%%%
\section{Introduction}
Recently a large number of new  Moishezon twistor spaces were obtained
\cite{Hon-I, Hon-II, Hon-III}.
An important common property of these twistor spaces is that they all admit   $\CC^*$-action.
This implies that the corresponding self-dual metrics admit  isometric $U(1)$-action.
Another common property of the twistor spaces is that the linear system $|K^{-1/2}|$ (so called the fundamental system) is a pencil.
Actually, the structure of the twistor spaces was investigated by making use of {\em reducible}  members of this pencil.
It is also remarkable that no example is known so far of a Moishezon twistor space whose $|K^{-1/2}|$ is empty (or even 0-dimensional).
The author does not know at all whether such a twistor space exists or not,
but it is certain that, if exists, it has to be studied through the linear system $|mK^{-1/2}|$ for some $m\ge 2$, the pluri-half-anticanonical systems.

With this background, a purpose of this note is to give  detailed results on the structure of the pluri-half-anticanonical systems for the most fundamental Moishezon twistor spaces, the {\em LeBrun twistor spaces} \cite[\S 7]{LB91}.
It turns out that it is rather easy to express the dimension of the system $|mK^{-1/2}|$ as a function of $n$ and $m$  in completely explicit form (Proposition \ref{prop:H0}).
The dimension formula immediately implies that, for LeBrun twistor spaces on $n\CP^2$, any element of $|mK^{-1/2}|$ with $m< n-1$,  is a pull-back of a curve of bidegree $(m,m)$ on $\CP^1\times\CP^1$ under the rational map associated to $|K^{-1/2}|$.
It readily follows from this that any element of $|mK^{-1/2}|$ with $m<n-1$ has ordinary singularity of multiplicity $m$ along two rational curves which are the base curves of $|K^{-1/2}|$
(Corollary \ref{cor0}).
The formula also implies that if $m\ge n-1$, general members of $|mK^{-1/2}|$ are {\em not}\, a pull-back of any curve on $\CP^1\times\CP^1$.
However, we can show that even such  members always have  singularity  along the same rational curves (Proposition \ref{prop:main1}, which includes more detailed information on the structure of $|mK^{-1/2}|$).
In particular, they are non-normal.
This is in contrast with a result shown by Pedersen-Poon \cite[Lemma 2.1]{PP94}, which means that any real irreducible members of $|K^{-1/2}|$ is smooth in general.
Also, noting that  the line bundle $K^{-1/2}$ is big for any Moishezon twistor spaces,
this singularity result indicates some peculiar property of LeBrun twistor spaces as compact complex manifolds.
%We note that a proof for the non-projectivity which does not rely on the Hitchin's theorem about the classification of K\"ahlerian twistor spaces \cite{Hi81} is not known (as far as the author knows.)
We also  determine the multiplicity of general members of $|mK^{-1/2}|$ along the base curves precisely (Proposition \ref{prop:main1} (ii)), and also shows that if we blow-up the two base curves, then  the divisor becomes an irreducible, {\em non-singular} surface
(Proposition \ref{prop:main1} (iii)).
Further, we determine structure of the last non-singular surfaces 
(Proposition \ref{prop:main2}).
Finally, we investigate structure of the rational map associated to $|mK^{-1/2}|$.
Thus principal information on the pluri-half-anticanonical system of LeBrun twistor spaces is fully obtained.

\vspace{2mm}\noindent
{\bf Notation.}
For  line bundles $L$ and $L'$, we often write $L+L'$ for the tensor product $L\otimes L'$, and $L^m$ or $mL$ for  $L^{\otimes m}$.
We write $h^i(L)$ for $\dim H^i(L)$.
The notation $\dim |L|$ means $h^0(L)-1$ as usual.
For a complex submanifold $X$ in a complex manifold $Y$,
$N_{X/Y}$ denotes the normal bundle of $X$ in $Y$.
$K$ means the canonical bundle.
The symbol $\sim$ denotes a linear equivalence.

%%%%%%%%%%%%%%%%%%%%%%%%%%%%%%%%%%%%%%%%%%%%%%%%

%%%%%%%%%%%%%%%%%%%%%%%%%%%%%%%%%%%%%%%%%%%%%%%%
\section{Study on pluri-half-anticanonical systems}
%%%%%%%%%%%%%%%%%%%%%%%%%%%%%%%%%%%%%%%%%%%%%%%%
Fix an arbitrary integer $n\ge 3$ and let $Z$ be any LeBrun twistor space on $n\CP^2$, constructed in \cite[\S 7]{LB91}.
We do not put  any assumption for the dimension of the automorphism group for $Z$,
so that the  corresponding self-dual metric (the LeBrun metric) on $n\CP^2$ may admit a 2-torus action, or just a U(1)-action.
Let $F$ be the canonical half of the anticanonical line bundle on $Z$, and $\sigma$ the real structure on $Z$. Recall that the {\em degree} of a divisor or a line bundle on $Z$ is defined as the intersection number with a twistor line.
Then $\deg F=2$, and among all line bundles on $Z$ of positive degree, $F$ can be characterized by the reality (namely $\ol{\sigma}^*F\simeq F$) and  minimality of the degree.
 
 The structure of the complete linear system $|F|$ has been completely understood by 
the works of LeBrun \cite{LB91}, Poon \cite{P92} and Kurke \cite{Ku92}: 
\begin{proposition} Let $Z$ and $F$ be as above. Then we have:
(i) $\dim |F|=3$, (ii) ${\rm{Bs}}\,|F|$ consists of two non-singular rational curves, which are mutually disjoint and conjugate, (iii) a general member of $|F|$ is a non-singular rational surface, which is obtained from $\CP^1\times\CP^1$ by blowing-up $2n$ points.
\end{proposition}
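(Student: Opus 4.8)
The plan is to exploit the reduction structure underlying LeBrun's construction. Because the LeBrun metric is isometric under a $U(1)$-action, the Jones--Tod correspondence presents $Z$ birationally as a fibration over the minitwistor space of hyperbolic $3$-space, namely the smooth quadric $\CP^1\times\CP^1$ (oriented geodesics, compactified across the diagonal). Writing $f\colon Z\dashrightarrow\CP^1\times\CP^1$ for the resulting rational map, the first step is to read off from the explicit construction that $F\simeq f^*\mathcal{O}(1,1)$ (the minitwistor half-anticanonical), that the generic fiber of $f$ is a rational curve on which $F$ has degree $0$, and that the $n$ points supporting the monopole produce a discriminant curve $B\subset\CP^1\times\CP^1$ of bidegree $(n,n)$ over which the fibers of $f$ degenerate. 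Parts (i)--(iii) should then follow formally from this picture together with the real structure $\sigma$.

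For (i), once $F=f^*\mathcal{O}(1,1)$ is known and the generic fiber of $f$ is a rational curve on which $F$ is trivial, a projection-formula argument, carried out on a resolution of $f$ and using that the indeterminacy locus has codimension two, gives $f_*\mathcal{O}_Z(F)\simeq\mathcal{O}(1,1)$ and hence
\[
H^0(Z,F)\simeq H^0\bigl(\CP^1\times\CP^1,\mathcal{O}(1,1)\bigr),
\]
which is $4$-dimensional, so $\dim|F|=3$. I would emphasize that Riemann--Roch alone cannot replace this: the index-theoretic Chern numbers of the twistor space of $n\CP^2$ give $F^3=8-2n$, whence $\chi(F)=F^3+2=10-2n$ (using $\chi(\mathcal{O}_Z)=1$, i.e.\ $F\cdot c_2(Z)=12$). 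Since $h^3(F)=h^0(-3F)=0$, comparison with $h^0(F)=4$ forces $h^1(F)\ge 2n-6>0$ for $n\ge 4$, so a naive combination of Riemann--Roch with a vanishing theorem genuinely fails and the push-forward computation is essential.

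For (ii), since $|\mathcal{O}(1,1)|$ is base-point free on $\CP^1\times\CP^1$, any base point of $|F|$ must lie on the indeterminacy locus of $f$. I would analyze $f$ near its indeterminacy using the equivariant normal form supplied by the reduction: the fixed data of the $\CC^*$-action single out two curves that are contracted by any resolution of $f$, and one checks directly that they are smooth rational, mutually disjoint, and exchanged by $\sigma$ (they sit over the two ends associated with the removed diagonal, which conjugation swaps together with the two rulings). That $\mathrm{Bs}\,|F|$ is exactly their union is then confirmed by computing the movable part of $S_1\cap S_2=f^{-1}(C_1\cap C_2)$ for two general members and checking that nothing further is forced into the base locus.

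For (iii), a general member is $S=f^{-1}(C)$ with $C\in|\mathcal{O}(1,1)|$ a smooth conic, so $f|_S\colon S\to C\simeq\CP^1$ is a conic bundle whose reducible fibers occur exactly over $C\cap B$. As $C\cdot B=2n$, there are $2n$ such fibers, so $S$ is a Hirzebruch surface blown up at $2n$ points, which for a conic bundle over $\CP^1$ is isomorphic to $\CP^1\times\CP^1$ blown up at $2n$ points; in particular $S$ is rational, and smoothness of the general $S$ follows by Bertini after resolving $f$, the resolution being an isomorphism over $S$ precisely because $m=1$. This is consistent with adjunction, $K_S=(K_Z+F)|_S=-F|_S$, giving $K_S^2=F^3=8-2n$ in agreement with the blow-up count. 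The main obstacle is the input assembled in the first paragraph: rigorously extracting $f$, the identity $F=f^*\mathcal{O}(1,1)$, and the bidegree $(n,n)$ of $B$ from LeBrun's explicit metrics, and controlling the indeterminacy of $f$ finely enough to pin down the base curves. This is exactly the geometric content provided by \cite{LB91,P92,Ku92}, after which the remaining steps are formal.
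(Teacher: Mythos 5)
The paper never proves this proposition---it is quoted as a known result of LeBrun, Poon and Kurke---so the relevant comparison is with the facts the paper records immediately afterwards, namely \eqref{nb1}, the diagram \eqref{cd2}, and the key relation \eqref{key1}. Your proposal rests on a premise that contradicts these: you assert $F\simeq f^*\mathscr O(1,1)$ and that $F$ has degree $0$ on the generic fiber of $f$. In fact the correct relation on the blow-up $\mu:\tilde Z\to Z$ of the two base curves is $\mu^*F\simeq\tilde\Phi^*\mathscr O(1,1)+E_1+\ol E_1$, and since $E_1,\ol E_1$ are sections of $\tilde\Phi$, the bundle $F$ has degree $2$ on the generic fiber of $\Phi$ (the closure of a generic $\CC^*$-orbit, which meets each of $C_1,\ol C_1$ once). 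Your own Riemann--Roch aside already refutes your premise: you correctly note $F^3=8-2n$, but if $\mu^*F$ were $\tilde\Phi^*\mathscr O(1,1)$ then $F^3=(\mu^*F)^3=(\tilde\Phi^*\mathscr O(1,1))^3=0$, contradicting $8-2n\neq 0$ for $n\neq 4$. Equally fatally, triviality of $F$ on the fibers of $f$ would force $h^0(F^m)=h^0(\mathscr O(m,m))=(m+1)^2$ for \emph{all} $m$, contradicting Proposition \ref{prop:H0} (extra sections appear exactly when $m\ge n-1$), the bigness of $K^{-1/2}$, and the birationality of $\Phi_m$ for $m\ge n-1$ proved at the end of the paper.

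This error is not cosmetic, and it propagates into all three parts. For (i), the correct computation gives $\tilde\Phi_*\mu^*F\simeq\mathscr O(1,1)\oplus\mathscr O(2-n,0)\oplus\mathscr O(0,2-n)$, so $h^0(F)=4$ holds only because $n\ge 3$ kills the two extra summands; this is exactly where the normal bundle data \eqref{nb1} enters, and your projection-formula argument has no mechanism that produces (let alone discards) these summands---indeed for $n=2$ your argument would give the same answer, although there ${\rm Bs}\,|F|=\emptyset$ and the structure is different. For (ii), the two curves lie in the base locus not because of any positivity statement on $\Sigma_0$ but because $h^0(\mu^*F)=h^0(\mu^*F-E_1-\ol E_1)$ forces $E_1+\ol E_1$ to be a \emph{fixed component} of $|\mu^*F|$, i.e.\ every member of $|F|$ contains $C_1\cup\ol C_1$; your ``compute the movable part'' step, which is where this must be established, is precisely what is missing. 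For (iii), a general member is not the divisorial pullback $f^*C$: it satisfies $\mu^*S=\tilde\Phi^*C+E_1+\ol E_1$, and in particular $\mu^{-1}(S)\supset E_1\cup\ol E_1$, so ``the resolution is an isomorphism over $S$'' is false as stated (what is true is that the \emph{strict} transform $\tilde\Phi^{-1}(C)\to S$ is an isomorphism). Your conic-bundle picture with a bidegree-$(n,n)$ discriminant and $2n$ degenerate fibers is right in spirit, and once you grant \eqref{nb1} and \eqref{key1}---which is precisely the input the paper takes from LeBrun's work---your outline reduces to the $m=1$ case of the computation proving Proposition \ref{prop:H0}. But as written, the proposal derives the statement from a hypothesis that is inconsistent with the statement's own consequences.
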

 
 Let $C_1$ and $\ol C_1=\sigma(C_1)$ be the base curve of $|F|$.
 (These curves will play an important role in the following computation.)
Let $\Phi:Z\to\CP^3$ be the rational map associated to the system $|F|$, so that 
 its indeterminacy locus is $C_1\cup \ol C_1$.
 Then basically by the existence of two (mutually conjugate) pencils whose degree is one,
 by which $|F|$ is generated,
 the image of the rational map $\Phi$ is a non-singular quadratic surface.
 Hence the image $\Phi(Z)$ is isomorphic to $\Sigma_0:=\CP^1\times\CP^1$.
   Let $\mu:\tilde Z\to Z$ be the blowing-up along $C_1\cup\ol C_1$, and $E_1$ and $\ol E_1$ the exceptional divisors over $C_1$ and $\ol C_1$ respectively.
Then the composition $\tilde \Phi:={\Phi}\circ\mu$ becomes a morphism (onto $\Sigma_0$), and $E_1$ and $\ol E_1$ become sections of $\tilde{\Phi}$.
By the properties $N_{C_1/Z}\simeq\mathscr O(1-n)^{\oplus 2}$ and   $N_{\ol C_1/Z}\simeq\mathscr O(1-n)^{\oplus 2}$ and an important property that the fiber directions of $\mu|_{E_1}:E_1\to C_1$ and $\mu|_{\ol E_1}:\ol E_1\to \ol C_1$ are {\em different} with respect to the isomorphism $E_1\simeq\ol E_1$ obtained from $\tilde{\Phi}$, 
we have \cite[page 246]{LB91}
\begin{align}\label{nb1}
N_1:=N_{E_1/\tilde Z}\simeq\mathscr O(1-n,-1),\quad
\ol N_1:=N_{\ol E_1/\tilde Z}\simeq\mathscr O(-1,1-n).
\end{align}
(So $\mathscr O_{E_1}(1,0)$ and $\mathscr O_{\ol E_1}(0,1)$ are supposed to be the fiber classes of $\mu|_{E_1}$ and $\mu|_{\ol E_1}$ respectively.)
Thus the situation is summarized in  the following basic diagram:
\begin{equation}\label{cd2}
\xymatrix{
   \tilde Z \ar@{->}[r]^{{\mu}}\ar@{->}[d]_{\tilde\Phi}  & Z  \ar@{->}[dl]^{\Phi}\\
   \Sigma_0. & \\
}
 \end{equation}
Then as $\mathscr O(1)|_{\Sigma_0}\simeq \mathscr O(1,1)$, by the above situation, we have the basic relation 
\begin{align}\label{key1}
\mu^*F\simeq\tilde{\Phi}^*\mathscr O_{\Sigma_0}(1,1)+E_1+\ol E_1.
\end{align}
We first compute the dimension of $H^0(F^m)$:

\begin{proposition}\label{prop:H0}
For any $m\ge 1$, we have the natural isomorphism:
\begin{multline}\label{H0-0}
H^0(Z,F^m)\simeq
H^0(\Sigma_0, \mathscr O(m,m))\\
\oplus 
\left(\bigoplus_{1\le k\le \frac{m}{n-1}}
H^0\left(\Sigma_0, \mathscr O(m-k(n-1),m-k)\oplus \mathscr O(m-k, m-k(n-1))\right)\right).
\end{multline}
In particular we have 
\begin{align}\label{H0-01}
H^0(Z,F^m)\simeq H^0(\Sigma_0, \mathscr O(m,m)),\quad m<n-1.
\end{align}
\end{proposition}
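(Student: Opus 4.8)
The plan is to transport the entire computation to $\Sigma_0$ through $\tilde\Phi$ and reduce it to the cohomology of line bundles on $\CP^1\times\CP^1$. First I would note that $\mu$ is bimeromorphic with $\mu_*\mathscr O_{\tilde Z}=\mathscr O_Z$, so $H^0(Z,F^m)\simeq H^0(\tilde Z,\mu^*F^m)$, and since the edge homomorphism $H^0(\tilde Z,\mu^*F^m)\simeq H^0(\Sigma_0,\tilde\Phi_*\mu^*F^m)$ of the Leray spectral sequence is always an isomorphism, it suffices to identify the sheaf $\tilde\Phi_*\mu^*F^m$ on $\Sigma_0$. Raising \eqref{key1} to the $m$-th power gives $\mu^*F^m\simeq\tilde\Phi^*\mathscr O(m,m)+mE_1+m\ol E_1$, so by the projection formula $\tilde\Phi_*\mu^*F^m\simeq\mathscr O(m,m)\otimes\mathscr V_m$, where $\mathscr V_m:=\tilde\Phi_*\mathscr O(mE_1+m\ol E_1)$. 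Everything then comes down to decomposing $\mathscr V_m$, and the two summands in \eqref{H0-0} should reflect the two exceptional divisors.

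Next I would compute the two ``one-sided'' pushforwards $A:=\tilde\Phi_*\mathscr O(mE_1)$ and $B:=\tilde\Phi_*\mathscr O(m\ol E_1)$ by filtering according to pole order along the exceptional divisors. For each $k$ the sequence $0\to\mathscr O((k-1)E_1)\to\mathscr O(kE_1)\to\mathscr O(kE_1)|_{E_1}\to 0$ pushes down, and since $E_1$ is a section the last term transports to $\Sigma_0$ as $N_1^{\otimes k}=\mathscr O(-k(n-1),-k)$ by \eqref{nb1}; the analogous computation on $\ol E_1$ produces $\mathscr O(-k,-k(n-1))$. Moreover these filtrations split: building up $A$, the piece of pole order $k$ sits as the quotient, so splitting it off involves an extension class in $\mathrm{Ext}^1\bigl(\mathscr O(-k(n-1),-k),\mathscr O(-j(n-1),-j)\bigr)=H^1\bigl(\Sigma_0,\mathscr O((k-j)(n-1),\,k-j)\bigr)$ with $j<k$, and this $H^1$ vanishes because both twists are strictly positive. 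Hence $A\simeq\bigoplus_{k=0}^m\mathscr O(-k(n-1),-k)$ and, symmetrically, $B\simeq\bigoplus_{k=0}^m\mathscr O(-k,-k(n-1))$.

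Finally I would glue $A$ and $B$ using the disjointness $E_1\cap\ol E_1=\varnothing$. Fibrewise, a section of $\mathscr O(mE_1+m\ol E_1)$ over a fibre $\simeq\CP^1$ of $\tilde\Phi$ is a rational function with poles of order $\le m$ at the two distinct points cut out by $E_1$ and $\ol E_1$, so by partial fractions it splits uniquely into its principal part along $E_1$, its principal part along $\ol E_1$, and a constant; this yields a short exact sequence $0\to\mathscr O\to A\oplus B\to\mathscr V_m\to 0$, the kernel being the common constants $\tilde\Phi_*\mathscr O=\mathscr O$. Tensoring with $\mathscr O(m,m)$, passing to cohomology, and using $H^1(\Sigma_0,\mathscr O(m,m))=0$ then exhibits $H^0(Z,F^m)$ as the cokernel of the antidiagonal inclusion $H^0(\mathscr O(m,m))\hookrightarrow H^0(\mathscr O(m,m)\otimes A)\oplus H^0(\mathscr O(m,m)\otimes B)$, which is exactly the asserted direct sum once one discards the summands $\mathscr O(m-k(n-1),m-k)$ and $\mathscr O(m-k,m-k(n-1))$ with $k>m/(n-1)$, since these have no sections; the range $m<n-1$ leaves only the $k=0$ term and gives \eqref{H0-01}. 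The step I expect to be most delicate is controlling the pushforward over the degenerate fibres of $\tilde\Phi$: one must verify that every fibre is rational of arithmetic genus zero, that $E_1$ and $\ol E_1$ still meet each of them in a single reduced point, and hence that $R^1\tilde\Phi_*$ of the relevant non-negative bundles vanishes, so that the filtrations above are exact and the partial-fraction map is genuinely surjective as a morphism of sheaves rather than merely on a general fibre.
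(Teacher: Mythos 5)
Your proposal is correct, and it follows the paper's overall reduction---Leray for $\mu$, the projection formula for $\tilde\Phi$, and the identification of $\tilde\Phi_*\mathscr O(m(E_1+\ol E_1))$ as a direct sum of line bundles on $\Sigma_0$---but it handles the crucial splitting step by a genuinely different and, in fact, more robust argument. The paper filters $\mathscr O(k(E_1+\ol E_1))$ symmetrically, pushes down the restriction sequence to $E_1\cup\ol E_1$, and asserts that the resulting extension of $N_1^k\oplus\ol N_1^k$ by $\mathscr O\oplus\bigoplus_{j<k}(N_1^j\oplus\ol N_1^j)$ splits ``by computing $\mathrm{Ext}^1(\simeq H^1)$''. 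Read literally, that computation involves the cross terms $\mathrm{Ext}^1(N_1^k,\ol N_1^j)\simeq H^1\bigl(\mathscr O(k(n-1)-j,\,k-j(n-1))\bigr)$, which do \emph{not} vanish in general (for instance $n=5$, $k=2$, $j=1$ gives $H^1(\mathscr O(7,-2))\neq 0$), so the paper's one-line justification of \eqref{directim} is incomplete as stated. Your route avoids exactly this difficulty: splitting the one-sided pushforwards $A=\tilde\Phi_*\mathscr O(mE_1)$ and $B=\tilde\Phi_*\mathscr O(m\ol E_1)$ separately only requires extension classes in $H^1\bigl(\mathscr O((k-j)(n-1),\,k-j)\bigr)$ with both twists positive, where the vanishing is genuine, and the partial-fraction sequence $0\to\mathscr O\to A\oplus B\to\tilde\Phi_*\mathscr O(m(E_1+\ol E_1))\to 0$ (exact after pushforward because $\tilde\Phi_*\mathscr O_{\tilde Z}=\mathscr O$ and $R^1\tilde\Phi_*\mathscr O_{\tilde Z}=0$, the fibers being chains of rational curves) then recovers \eqref{directim}, hence \eqref{H0-0}, exactly as in the paper. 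The delicate point you flag at the end---vanishing of $R^1\tilde\Phi_*$ of the relevant sheaves over the degenerate fibers, using that $E_1$ and $\ol E_1$ are sections meeting each chain in a single reduced point---is the same input the paper invokes, so nothing beyond it is needed. In short: same skeleton, but your decompose-then-glue argument supplies a complete proof of the splitting where the paper's direct $\mathrm{Ext}$ computation, taken at face value, has a gap.
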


\begin{proof}
By  the Leray spectral sequence for the blow-up $\mu$, we readily obtain 
\begin{align}\label{Leray1}
H^i(Z, F^{ m})\simeq
H^i(\tilde Z,\mu^*F^{ m}),\quad \forall i\ge 0.
\end{align}
To compute the right-hand side, we first notice that by \eqref{key1} 
\begin{align}\notag
 R^q\tilde{\Phi}_*(\mu^*F^{ m})&\simeq
 R^q\tilde{\Phi}_*(\tilde{\Phi}^*\mathscr O(m,m)+mE_1+m\ol E_1))\\
\notag &\simeq \mathscr O(m,m)\otimes R^q\tilde{\Phi}_*\mathscr O(mE_1+m\ol E_1).
\end{align}
Further, as fibers of $\tilde{\Phi}$ are at most a string of smooth rational curves, we have
\begin{align}\notag
R^q\tilde{\Phi}_*\mathscr O(mE_1+m\ol E_1)=0,\quad \forall q>0,\,\,\forall m\ge 0.
\end{align}
Hence the spectral sequence
\begin{align}
E_2^{p,q}=H^p(\Sigma_0, R^q\tilde{\Phi}_*(\mu^*F^{ m}))
\Rightarrow
H^{p+q}(\tilde Z,\mu^*F^{ m})
\end{align}
degenerates at $E_2$-term and we obtain that 
\begin{align}\label{Leray2}
H^i(\tilde Z,\mu^*F^{ m})\simeq
H^i(\Sigma_0, \mathscr O(m,m)\otimes\tilde{\Phi}_*\mathscr O(mE_1+m\ol E_1)),\quad 
\forall i\ge 0.
\end{align}
For the right-hand side, by 
taking the direct image of an obvious exact sequence
\begin{align}\notag
0\lras \mathscr O((k-1)(E_1+\ol E_1))\lras \mathscr O(k(E_1+\ol E_1))\lras 
N_1^{ k}\oplus \ol N_1^{ k}\lras 0,
\end{align}
and computing Ext$^1\,(\simeq H^1)$ by using \eqref{nb1},
we inductively obtain isomorphisms
\begin{align}\label{directim}
\tilde{\Phi}_*\mathscr O(m(E_1+\ol E_1))\simeq \mathscr O\oplus
\left(
\bigoplus_{1\le k\le m} \left(N_1^{ k}\oplus \ol N_1^{ k}\right)
\right), \quad\forall m\ge 0,
\end{align}
where on the right-hand side we are identifying $E_1$ and $\ol E_1$ with $\Sigma_0$ by the morphism $\tilde{\Phi}$.
Therefore by \eqref{Leray1} and \eqref{Leray2} we have 
\begin{align}\label{H0-1}
H^i(Z,F^m)\simeq
H^i(\Sigma_0, \mathscr O(m,m))\oplus 
\left(\oplus_{k=1}^m
H^i(\Sigma_0, N_1^k(m,m)\oplus \ol N_1^k(m,m))\right),\quad
\forall i\ge 0,
\end{align} 
where $N_1^k(m,m)$ means $N_1^{ k}\otimes\mathscr O(m,m)$, and the same for $\ol N_1^{ k}(m,m)$.
By \eqref{nb1} 
we have 
\begin{align}
N_1^{ k}(m,m)\simeq \mathscr O(m-k(n-1),m-k),\quad
\ol N_1^{ k}(m,m)\simeq \mathscr O(m-k, m-k(n-1)).
\end{align}
In particular, if $m-k(n-1)<0$ then  $H^0(\Sigma_0, N_1^k(m,m)\oplus \ol N_1^k(m,m)))=0$. 
Therefore \eqref{H0-1} means the desired isomorphism \eqref{H0-0}.
In particular if $m/(n-1)<1$ namely if $m<n-1$, the second direct summand  does not appear and we obtain \eqref{H0-01}. 
This completes the proof of Proposition \ref{prop:H0}.
\end{proof}

We note that from the proof we have the same isomorphism for $H^1(F^m)$; it suffices to remove the requirement for $k$ in \eqref{H0-0}.

By \eqref{key1}, we have an inclusion $H^0(\Sigma_0,\mathscr O(m,m))\subset H^0(Z,F^m) $ for any $m\ge 1$ from the beginning. 
Hence the isomorphism \eqref{H0-01} implies: 

\begin{corollary}\label{cor0} When $1\le m\le n-2$,  we have the following:  (i)
Any member of the system $|F^m|$ is of the form $\mu(\tilde\Phi^{-1}(D))$ where $D$ is a curve on $\Sigma_0$ whose bidegree is $(m,m)$.
In particular, all members are $\CC^*$-invariant.
(ii) A general member $Y$ of $|F^m|$ is an irreducible non-normal surface which is birational to a ruled surface of genus $(m-1)^2$.
Further, $Y$ has ordinary singularity of multiplicity $m$ along $C_1\cup \ol C_1$.
\end{corollary}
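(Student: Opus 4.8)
The plan is to promote the numerical equality \eqref{H0-01} to a geometric description of the members, and then to read off all the stated properties from the diagram \eqref{cd2}. \emph{For (i)}, I would first make the inclusion $H^0(\Sigma_0,\mathscr O(m,m))\hookrightarrow H^0(Z,F^m)$ noted just before the corollary explicit: given $s_D\in H^0(\Sigma_0,\mathscr O(m,m))$, the relation \eqref{key1} lets me form $\tilde\Phi^*s_D$ times the $m$-th powers of the tautological sections cutting out $E_1$ and $\ol E_1$, obtaining a section of $\mu^*F^m$ which, via the Leray isomorphism \eqref{Leray1}, descends to $H^0(Z,F^m)$. Since $\dim H^0(\Sigma_0,\mathscr O(m,m))=(m+1)^2=h^0(F^m)$ when $m\le n-2$ by \eqref{H0-01}, this inclusion is an equality, so every section of $F^m$ arises this way. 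On $\tilde Z$ its zero divisor is visibly $\tilde\Phi^{-1}(D)+mE_1+m\ol E_1$ with $D:=\{s_D=0\}$ of bidegree $(m,m)$, and pushing forward by $\mu$ (which contracts $E_1,\ol E_1$) gives the member $\mu(\tilde\Phi^{-1}(D))$. For the $\CC^*$-invariance I would invoke the structural feature of the LeBrun construction that the distinguished $\CC^*$-action (the complexified $U(1)$-isometry) acts fibrewise along $\tilde\Phi$, with $E_1$ and $\ol E_1$ as its two fixed sections, and hence trivially on $\Sigma_0$; consequently $\Phi$ is $\CC^*$-invariant and \emph{every} preimage $\Phi^{-1}(D)$ is invariant regardless of $D$.

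\emph{For the birational type in (ii)}, I would take $D$ general, so that by Bertini it is smooth and irreducible, and compute its genus by adjunction on $\Sigma_0$: from $K_{\Sigma_0}\sim\mathscr O(-2,-2)$ and $(m,m)\cdot(m-2,m-2)=2m(m-2)$ one gets $g(D)=1+\tfrac12\cdot 2m(m-2)=(m-1)^2$. Writing $\tilde Y:=\tilde\Phi^{-1}(D)$, the map $\tilde\Phi|_{\tilde Y}:\tilde Y\to D$ has connected fibres that are generically a single $\CP^1$ (fibres of $\tilde\Phi$ being at most strings of smooth rational curves), so $\tilde Y$ is irreducible and birationally ruled over $D$. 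Since $\mu$ restricts to a birational morphism $\tilde Y\to Y$, the member $Y=\mu(\tilde Y)$ is irreducible and birational to a ruled surface of genus $(m-1)^2$.

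\emph{The singularity along $C_1\cup\ol C_1$ is the heart of the matter.} The divisor computation above gives at once $\mu^*Y=\tilde Y+mE_1+m\ol E_1$, exhibiting $\tilde Y$ as the strict transform of $Y$ and yielding $\mathrm{mult}_{C_1}Y=\mathrm{mult}_{\ol C_1}Y=m$; in particular $Y$ is singular in codimension one along $C_1,\ol C_1$ and so is non-normal for $m\ge 2$. To see the singularity is \emph{ordinary}, I would study $\tilde Y\cap E_1$. As $E_1$ is a section of $\tilde\Phi$, this intersection is $\cong D$ under $E_1\cong\Sigma_0$, and it is transverse because $\tilde Y$ is vertical for $\tilde\Phi$ while $E_1$ is horizontal. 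Now $\mu|_{E_1}\colon E_1\cong\Sigma_0\to C_1$ is the projection with fibre class $\mathscr O_{E_1}(1,0)$ by \eqref{nb1}, so $D$ covers $C_1$ with degree $(m,m)\cdot(1,0)=m$. Hence over a general point of $C_1$ the curve $D$ meets the exceptional $\CP^1$ in $m$ points, distinct for general $D$; these correspond to $m$ distinct normal directions, i.e.\ to $m$ smooth local sheets of $Y$ meeting along $C_1$ with pairwise distinct tangent planes. This is precisely an ordinary singularity of multiplicity $m$, and the conjugate argument handles $\ol C_1$.

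\emph{The main obstacle} is not the multiplicity count (which is immediate from $\mu^*Y=\tilde Y+mE_1+m\ol E_1$) but the \emph{ordinariness}: I must verify that for general $D$ the $m$ points of $D$ on a general exceptional fibre are genuinely distinct and that $\tilde Y$ is smooth and meets $E_1$ transversally along $D$, so that the $m$ branches are reduced smooth sheets with distinct tangent planes rather than a tangential or non-reduced configuration. This calls for a genericity argument for $D$ together with some care near any degenerate fibres of $\tilde\Phi$, although such fibres lie over a proper locus of $\Sigma_0$ that a general $D$ meets away from the sections $E_1,\ol E_1$.
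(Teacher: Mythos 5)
Your proposal is correct and follows essentially the same route as the paper: the paper deduces the corollary directly from the isomorphism \eqref{H0-01} combined with the inclusion $H^0(\Sigma_0,\mathscr O(m,m))\subset H^0(Z,F^m)$ coming from \eqref{key1} (exactly your dimension-count argument), and then reads off the statements in (ii) from the diagram \eqref{cd2}, as you do. Your additional verifications --- the adjunction computation of the genus $(m-1)^2$, the fibrewise nature of the $\CC^*$-action over $\Sigma_0$, the transversality of $\tilde Y$ and $E_1$ giving $m$ distinct sheets over a general point of $C_1$ --- are correct fillings-in of details the paper leaves implicit.
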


The first statement might be more clearly stated that the system $|F^m|$ is generated by $|F|$
 when $m<n-1$; in other words, $H^0(F^m)={\rm{Sym}}^mH^0(F)$ holds for $m<n-1$.
Thus the situation  becomes apparent when $m<n-1$. 
Next we investigate what happens for the alternative case $m\ge n-1$.

\begin{proposition}\label{prop:main1}
Fix any $m\ge n-1$ and put $l:=[m/(n-1)]$ (the biggest integer not greater than $m/(n-1)$). Let  $\mathscr I\subset\mathscr O_Z$ be the ideal sheaf of the curve $C_1\cup\ol C_1$. Then we have the following:
(i) There is the following sequence of subspaces of $\,H^0(Z,F^m)$:
\begin{multline}\label{subsystem}
\tilde{\Phi}^*H^0(\mathscr O(m,m))
\simeq
H^0(F^m\otimes\mathscr I^m)
\subsetneq
H^0(F^m\otimes\mathscr I^{m-1})
\\\subsetneq
\cdots
\subsetneq
H^0(F^m\otimes\mathscr I^{m-l+1})
\subsetneq
H^0(F^m\otimes\mathscr I^{m-l})
=H^0(F^m).
\end{multline}
Moreover for the differences of the dimensions, we have
\begin{align}\label{diff}
h^0(F^m\otimes \mathscr I^{m-k})
-
h^0(F^m\otimes \mathscr I^{m-k+1})
=
2h^0(\mathscr O(m+k(1-n),\, m-k)).
\end{align}
(ii) Take any $1\le k\le l$ and let $\,Y$ be a general member of the system $|F^m\otimes\mathscr I^{m-k}|$.
Then $Y$ has ordinary singularity of multiplicity $m-k$ along $C_1\cup\ol C_1$, and the strict transform $\,\tilde Y$ of $Y$ into $\tilde Z$ is irreducible and non-singular.
\end{proposition}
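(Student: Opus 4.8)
The plan is to push the whole computation down to $\Sigma_0=\CP^1\times\CP^1$ through $\tilde\Phi$, reusing the machinery of Proposition \ref{prop:H0}, and then to read the geometry of a general divisor off the resulting decomposition. Throughout I write $\mathscr L_k:=\tilde\Phi^*\mathscr O(m,m)+k(E_1+\ol E_1)$, so that by \eqref{key1} one has $\mathscr L_k\simeq\mu^*F^m-(m-k)(E_1+\ol E_1)$. For part (i), the first step is the standard identity $\mu_*\mathscr O_{\tilde Z}(-j(E_1+\ol E_1))\simeq\mathscr I^{j}$ for all $j\ge0$, valid because $C_1$ and $\ol C_1$ are smooth and disjoint; together with the projection formula this yields $H^0(Z,F^m\otimes\mathscr I^{m-k})\simeq H^0(\tilde Z,\mathscr L_k)$. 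I would then run the Leray/direct-image argument of Proposition \ref{prop:H0} verbatim, truncating \eqref{directim} at $i=k$ in place of $i=m$, to get
\begin{align}\notag
H^0(\tilde Z,\mathscr L_k)\simeq H^0(\mathscr O(m,m))\oplus\bigoplus_{i=1}^{k}H^0\bigl(\mathscr O(m-i(n-1),m-i)\oplus\mathscr O(m-i,m-i(n-1))\bigr).
\end{align}
Raising $k$ by one adjoins exactly the $i=k$ summand, which is nonzero precisely when $m-k(n-1)\ge0$, i.e. when $k\le l$. This produces the chain \eqref{subsystem} (its bottom being $\tilde\Phi^*H^0(\mathscr O(m,m))$ and, at $k=l$, its top being $H^0(F^m)$ by Proposition \ref{prop:H0}), makes every inclusion strict, and gives the dimension jump \eqref{diff} after noting $h^0(\mathscr O(a,b))=h^0(\mathscr O(b,a))$.

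For part (ii), the crux — and the step I expect to be the main obstacle — is to prove that $|\mathscr L_k|$ is base point free for $1\le k\le l$. Away from $E_1\cup\ol E_1$ this is easy: under the isomorphism above the system restricts to $|F^m\otimes\mathscr I^{m-k}|$, which contains $\mathrm{Sym}^mH^0(F)$, and the latter has no base points off $C_1\cup\ol C_1$ since $\mathrm{Bs}\,|F|=C_1\cup\ol C_1$. Along $E_1$ (and symmetrically $\ol E_1$) I would argue that the restriction $H^0(\mathscr L_k)\to H^0(E_1,\mathscr L_k|_{E_1})$ is surjective: in the decomposition above the summand with $i=k$ maps isomorphically onto $H^0(E_1,\mathscr L_k|_{E_1})$, and by \eqref{nb1} one computes $\mathscr L_k|_{E_1}\simeq\mathscr O(m-k(n-1),m-k)$, which is globally generated because both entries are $\ge0$ for $k\le l$. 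Hence $|\mathscr L_k|$ cuts out a base point free system on $E_1$, and therefore has no base points anywhere on $\tilde Z$.

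Granting base point freeness, Bertini shows a general $\tilde Y\in|\mathscr L_k|$ is smooth. For irreducibility I would check that the restriction $H^0(\mathscr L_k)\to H^0(f,\mathscr L_k|_f)\simeq H^0(\CP^1,\mathscr O(2k))$ to a general fibre $f$ of $\tilde\Phi$ is an isomorphism (the top-order pieces along $E_1$ and $\ol E_1$ together account for exactly the $2k+1$ dimensions), so that $\tilde Y$ meets the general fibre in $2k$ general points; a monodromy (uniform position) argument then forces $\tilde Y$ to be irreducible. I avoid vanishing theorems here since $\tilde Z$ is non-Kähler, which is why the fibrewise argument is the natural route.

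Finally I would transport this to $Y=\mu_*\tilde Y$. Since $\mathscr L_k|_{E_1}\ne0$, a general $\tilde Y$ does not contain $E_1$, so $\mu_*\tilde Y=Y$ and $\tilde Y\cap E_1$ is a general member of $|\mathscr O(m-k(n-1),m-k)|$, hence smooth and meeting a general fibre of $\mu|_{E_1}$, namely a projectivized normal space $\mathbb P(N_{C_1/Z})_p$, transversally in $m-k$ distinct points. The length $m-k$ of this intersection is exactly $\mathrm{mult}_{C_1}Y$ at $p$, while the distinctness of the points says that the projectivized tangent cone of $Y$ along $C_1$ consists of $m-k$ reduced points; together with the symmetric statement along $\ol C_1$ this is precisely the assertion that $Y$ has ordinary singularity of multiplicity $m-k$ along $C_1\cup\ol C_1$, with $\tilde Y$ its smooth strict transform.
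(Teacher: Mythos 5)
Your part (i), your base-point-freeness argument, and your final computation of the ordinary singularity are all correct, and they are essentially equivalent to the paper's proof: where the paper runs an induction on the exact sequences obtained by multiplying by the section cutting out $E_1+\ol E_1$ (its \eqref{key3}--\eqref{key5}, with an $H^1$-vanishing at each stage), you push everything down to $\Sigma_0$ and truncate \eqref{directim} at level $k$. Both routes produce the same decomposition of $H^0(\mu^*F^m-(m-k)(E_1+\ol E_1))$ and, crucially, the same surjectivity of the restriction map onto $H^0$ of $E_1\cup\ol E_1$, which is what base-point-freeness rests on; your identification $\mu_*\mathscr O_{\tilde Z}(-j(E_1+\ol E_1))\simeq\mathscr I^j$ is standard and is implicitly used by the paper as well.

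The genuine gap is in your irreducibility step. First, a misstatement: the restriction $H^0(\tilde Z,\mathscr L_k)\to H^0(f,\mathscr L_k|_f)\simeq H^0(\CP^1,\mathscr O(2k))$ is certainly not an isomorphism, since the source already contains $H^0(\mathscr O(m,m))$, of dimension $(m+1)^2$; what is true (and what your decomposition proves, because every summand $\mathscr O(m-i(n-1),m-i)$ with $0\le i\le k\le l$ is base point free, so evaluation of $\tilde\Phi_*\mathscr L_k$ at a general point of $\Sigma_0$ is onto) is that this restriction is \emph{surjective}. Second, and more seriously, surjectivity plus full symmetric monodromy on the $2k$ points does not force irreducibility. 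The assignment sending each point of $\tilde Y\cap f$ to the connected component of $\tilde Y$ containing it induces a monodromy-invariant partition of the $2k$ points, but the symmetric group $S_{2k}$ admits exactly two invariant partitions: the trivial one and the partition into singletons. The latter corresponds precisely to the scenario that $\tilde Y$ is a disjoint union of $2k$ surfaces, each meeting a general fibre of $\tilde\Phi$ in one point, and your argument cannot exclude it. (You would also need to rule out components of $\tilde Y$ missing the general fibre; this does follow from base-point-freeness, since members containing a fixed divisor form a proper linear subsystem, but it has to be said.) The repair is exactly what the paper does: invoke Bertini in full. Base-point-freeness gives smoothness, and since the morphism associated to $|\mathscr L_k|$ has image of dimension at least $2$ — it separates distinct general fibres via the subsystem $\tilde{\Phi}^*|\mathscr O(m,m)|+k(E_1+\ol E_1)$, and by your surjectivity it even embeds a general fibre — the system is not composed with a pencil and a general member is connected, hence irreducible. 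Note that this form of Bertini involves no vanishing theorem and no K\"ahler hypothesis (the connectedness is checked on the projective image), so your stated reason for avoiding it does not apply; alternatively, you can complete your own route by observing that the associated map is generically injective, pulling back a Bertini-general hyperplane section of its image, and noting that a general member contains no component of the locus where the map fails to be an isomorphism.
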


\begin{proof}
Let $s_1\in H^0(\mathscr O_{\tilde Z}(E_1+\ol E_1))$ be an element satisfying div\,$(s_1)=E_1+\ol E_1$.
Let $1\le k\le l$ and consider the following injection of sheaves on $\tilde Z$:
\begin{align}\notag
\mu^*F^m - (m-k+1)(E_1+\ol E_1)
\,\stackrel{\otimes s_1}{\lra}\,
\mu^*F^m - (m-k)(E_1+\ol E_1).
\end{align}
Since 
\begin{align}\notag
\left(\mu^*F^m - (m-k)(E_1+\ol E_1)\right)|_{E_1}
&\simeq
\left(\tilde{\Phi}^*\mathscr O(m,m) + k (E_1+\ol E_1)\right)|_{E_1}\\
\notag&\simeq
\mathscr O(m,m)+k\mathscr O(1-n,-1)\\
&\simeq
\mathscr O_{E_1}(m+k(1-n),\,m-k),\label{rest1}
\end{align}
and an analogous isomorphism for the restriction on $\ol E_1$,
we obtain an exact sequence
\begin{multline}\label{key3}
0\lras 
\mu^*F^m - (m-k+1)(E_1+\ol E_1)
\,\stackrel{\otimes s_1}{\lra}\,
\mu^*F^m - (m-k)(E_1+\ol E_1)\\
\lras
\mathscr O_{E_1}(m+k(1-n),\,m-k)
\,\cup\,
\mathscr O_{\ol E_1}(m-k,\,m+k(1-n))
\lras 0.
\end{multline}
Let $\mathscr L_k$ denotes the last non-trivial term of \eqref{key3} (supported on $E_1\cup\ol E_1$).
As for the sign for the integers in $\mathscr L_k$, we readily have
$m+k(1-n)\ge 0$ and $m-k> 0$ under the assumptions $m\ge n-1$ and $1\le k\le l$.
Therefore we have 
\begin{align}\label{key4}
H^1(\mathscr L_k)=0 \quad(1\le k\le l).
\end{align}
On the other hand, if $k=1$, the first non-trivial term of \eqref{key3} becomes isomorphic to 
$\tilde{\Phi}^*\mathscr O(m,m)$ by \eqref{key1}, so that its $H^1$ also vanishes.
Therefore,
 the cohomology exact sequence for \eqref{key3} in the case $k=1$ gives an exact sequence
\begin{align}\label{exact1}
0
\lras
 H^0(\tilde{\Phi}^*\mathscr O(m,m))
\,\stackrel{\otimes s_1}{\lra}\,
H^0(\mu^*F^m - (m-1)(E_1+\ol E_1))
\lras
H^0(\mathscr L_1)
\lras
0
\end{align}
and also the vanishing (from \eqref{key3} with $k=1$) 
\begin{align}
H^1(\mu^*F^m - (m-1)(E_1+\ol E_1))=0.
\end{align}
Therefore by \eqref{key4}, we inductively obtain the exact sequence
\begin{multline}\label{key5}
0\lras
H^0(\mu^*F^m - (m-k+1)(E_1+\ol E_1))
\,\stackrel{\otimes s_1}{\lra}\,
H^0(\mu^*F^m - (m-k)(E_1+\ol E_1))\\
\lras
H^0(\mathscr L_k)
\lras 0, \quad(1\le k\le l),
\end{multline}
and the vanishing
\begin{align}
H^1(\mu^*F^m - (m-k)(E_1+\ol E_1))=0\quad (1\le k\le l).
\end{align}
By using the exact sequences \eqref{key5} successively, we obtain 
\begin{align}
h^0(F^m\otimes\mathscr I^{m-l})
&=h^0(F^m\otimes\mathscr I^{m-l+1})+h^0(\mathscr L_l)\\
&=h^0(F^m\otimes\mathscr I^{m-l+2})+h^0(\mathscr L_{l-1})+h^0(\mathscr L_l)\notag\\
&=\cdots\notag\\
&=h^0(F^m\otimes\mathscr I^m)+\sum_{1\le k\le l}h^0(\mathscr L_k)\notag\\
&=h^0(\mathscr O(m,m))+\sum_{1\le k\le l}h^0(\mathscr L_k).\notag
\end{align}
This directly gives the sequence \eqref{subsystem} and the dimension formula \eqref{diff}. Hence we obtain (i).

For the second claim, we first note that,  since $|\mathscr O(m,m)|$ (on $\Sigma_0$) is base point free,
by the injection in the exact sequence \eqref{exact1}, we obtain  ${\rm{Bs}}\,|\mu^*F^m-(m-1)(E_1+\ol E_1)|\subset E_1\cup \ol E_1$.
Further, as $\mathscr L_1=\mathscr O(m+1-n,m-1)\cup\mathscr O(m-1,m+1-n)$ and $m+1-n\ge 0$ and $m-1> 0$ as  already remarked,
we have ${\rm{Bs}}\,|\mathscr L_1|=\emptyset$.
Therefore the surjectivity of the restriction map in \eqref{exact1} implies 
 ${\rm{Bs}}\,|\mu^*F^m-(m-1)(E_1+\ol E_1)|=\emptyset$.
Once this is obtained, again by using the exact sequences \eqref{key5} and verifying 
${\rm{Bs}}\,|\mathscr L_k|=\emptyset$ for $1\le k\le l$, we inductively obtain 
$$
{\rm{Bs}}\,\left|\mu^*F^m-(m-k)(E_1+\ol E_1)\right|=\emptyset,\quad 1\le k\le l.
$$
Therefore by Bertini's theorem a general member of the system $|\mu^*F^m-(m-k)(E_1+\ol E_1)|$ is irreducible and non-singular (when $1\le k\le l)$.
Let $\tilde Y$ be such a member.
Then if we put $Y:=\mu(\tilde Y)$,
since $\tilde Y|_{E_1}\in |\mathscr O(m+k(1-n),m-k)|$ and 
$\tilde Y|_{\ol E_1}\in |\mathscr O(m-k,m+k(1-n))|$, 
taking into account for the directions of the blowing-down $\mu$
(see \eqref{nb1}),
 this gives a general member of the system $|F^m\otimes \mathscr I^{m-k}|$,
which has ordinary singularity of multiplicity $m-k$ along $C_1\cup\ol C_1$.
Thus   we obtain the  claim of (ii),
and finish a proof of Proposition \ref{prop:main1}.
\end{proof}

\begin{remark}
{\em
It is possible to give generators of the system $|F^{n-1}|$ in completely explicit form.
They can be chosen in such a way that all irreducible components of the generators are  degree 1 divisors.
It might be interesting to remark that the situation is almost the same for the twistor spaces studied in \cite{Hon-II}; namely LeBrun twistor spaces and the twistor spaces in \cite{Hon-II} have a common property, at least for the linear system $|F^{n-1}|$. (See \cite[Lemma 2.8]{Hon-II}; also compare Proposition \ref{prop:main2} below with \cite[Theorem 4.3]{Hon-II}.)
}
\end{remark}

As an immediate consequence of Proposition \ref{prop:main1} we obtain

\begin{corollary}\label{cor:sing}
For any LeBrun twistor spaces on $n\CP^2$ with $n\ge 3$, a divisor belonging to the system $|F^m|$ can be non-singular only when $m=1$.
\end{corollary}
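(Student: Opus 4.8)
The plan is to reduce the whole statement to a single numerical inequality about multiplicities along the base curves. The guiding observation is elementary: a surface that has multiplicity at least two along a curve is automatically singular there. Hence it suffices to prove that \emph{every} member of $|F^m|$ with $m\ge 2$ has multiplicity $\ge 2$ along $C_1\cup\ol C_1$. This converts the assertion, which concerns an \emph{arbitrary} (possibly non-general) divisor, into a statement about the order of vanishing of sections of $F^m$ along the two base curves — precisely the kind of data already controlled by the preceding results.

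First I would dispose of the range $2\le m\le n-2$. Here Corollary \ref{cor0}(i) tells us that any member $Y$ has the form $\mu(\tilde\Phi^{-1}(D))$ for a curve $D$ of bidegree $(m,m)$ on $\Sigma_0$, so its strict transform satisfies $\tilde Y\sim\tilde\Phi^*\mathscr O(m,m)$. By \eqref{key1} this gives $\mu^*Y=\tilde Y+m(E_1+\ol E_1)$, whence $Y$ has multiplicity exactly $m\ge 2$ along each of $C_1$ and $\ol C_1$ and is therefore singular. For the complementary range $m\ge n-1$ I would instead read off the top of the filtration \eqref{subsystem} from Proposition \ref{prop:main1}(i): the equality $H^0(F^m)=H^0(F^m\otimes\mathscr I^{m-l})$, with $l=[m/(n-1)]$, says that every section of $F^m$ vanishes to order at least $m-l$ along $C_1\cup\ol C_1$, so every member of $|F^m|$ has multiplicity $\ge m-l$ there.

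It then remains to verify $m-l\ge 2$. Since $l\le m/(n-1)$ we get $m-l\ge m(n-2)/(n-1)\ge n-2$ whenever $m\ge n-1$, and this is already $\ge 2$ once $n\ge 4$. For $n=3$ one has $l=[m/2]$ and $m-l=\lceil m/2\rceil$, which is $\ge 2$ as soon as $m\ge 3$. Combining the two ranges, every member of $|F^m|$ with $m\ge 2$ has multiplicity $\ge 2$ along $C_1\cup\ol C_1$, and hence is singular, \emph{except possibly} in the single boundary case $n=3,\ m=2$.

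The hard part will be exactly this boundary case $m-l=1$, which occurs only for $n=3$ and $m=2$. There the uniform multiplicity estimate degenerates: the general member has multiplicity exactly one along $C_1\cup\ol C_1$ — indeed $\tilde Y|_{E_1}\in|\mathscr O(0,1)|$ meets each fibre of $\mu|_{E_1}$ once, so the strict transform meets $E_1$ in a section — so the multiplicity bound along the base curves no longer forces a singularity. Consequently this case cannot be settled by the crude estimate used for all other $(n,m)$ and must be examined by a finer local analysis of the member along $C_1\cup\ol C_1$; this is the step I expect to be the main obstacle.
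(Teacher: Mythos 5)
Your strategy is exactly the one the paper has in mind: the paper gives no separate proof of Corollary \ref{cor:sing}, declaring it an ``immediate consequence'' of Proposition \ref{prop:main1}, and the intended reasoning is precisely the multiplicity bound you spell out --- multiplicity $m\ge 2$ along $C_1\cup\ol C_1$ for $2\le m\le n-2$ via Corollary \ref{cor0}, and multiplicity at least $m-l$ for $m\ge n-1$ via the equality $H^0(F^m)=H^0(F^m\otimes\mathscr I^{m-l})$ in \eqref{subsystem}. Your numerical analysis is also correct: $m-l\ge 2$ in every case except the single boundary case $(n,m)=(3,2)$.

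That remaining case is a genuine gap in your write-up, and you were right to single it out; what you could not have known is that it is equally a gap in the paper, and that it cannot be filled, because the corollary as stated fails for $(n,m)=(3,2)$. Indeed take $n=3$, $m=2$, so $k=l=1$. A general $Y\in|F^2|$ has strict transform $\tilde Y$ equal to a general member of the base-point-free system $|\mu^*F^2-(E_1+\ol E_1)|$, hence smooth and irreducible, and \eqref{rest1} gives $\tilde Y|_{E_1}\in|\mathscr O_{E_1}(0,1)|$, $\tilde Y|_{\ol E_1}\in|\mathscr O_{\ol E_1}(1,0)|$. Every divisor in $|\mathscr O_{E_1}(0,1)|$ is a smooth \emph{reduced} rational curve which is a section of $\mu|_{E_1}$ (the fibres of $\mu|_{E_1}$ have class $(1,0)$). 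Reducedness and smoothness of the scheme-theoretic intersection $\tilde Y|_{E_1}$ force $\tilde Y$ to meet $E_1$ transversally at every point $p$ of $\tilde Y\cap E_1$, whence $T_p\tilde Y\cap\ker d\mu_p\subset T_p(\tilde Y\cap E_1)\cap\ker d\mu_p=0$, since a section is nowhere tangent to the ruling; the same holds on $\ol E_1$. Moreover $\mu|_{\tilde Y}$ is injective ($\mu$ is injective off $E_1\cup\ol E_1$, and maps $\tilde Y\cap E_1$, $\tilde Y\cap\ol E_1$ bijectively onto $C_1$, $\ol C_1$). Thus $\mu|_{\tilde Y}$ is an injective immersion of a compact surface, i.e.\ an embedding, and $Y=\mu(\tilde Y)$ is smooth: by Proposition \ref{prop:main2}(iii) it is a smooth K3 surface belonging to $|F^2|=|K_Z^{-1}|$. (The same point is already visible in Proposition \ref{prop:main1}(ii): ``ordinary singularity of multiplicity $m-k$'' is a genuine singularity only when $m-k\ge 2$, and $m-k=1$ occurs exactly for $(n,m,k)=(3,2,1)$.) So the statement needs the exclusion $(n,m)\neq(3,2)$; with that exclusion your argument is complete and coincides with the paper's, and the ``finer local analysis'' you anticipated as the main obstacle in fact disproves, rather than proves, the remaining case.
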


Recall that by a result of Pedersen-Poon \cite{PP94}, on any twistor spaces on $n\CP^2$,
all real irreducible members of the system $|F|$ are non-singular.
Corollary \eqref{cor:sing} shows that this is true only for the system $|F|$ itself.
We also remark that when $n=2$, Proposition \ref{prop:main1} and Corollary \ref{cor:sing} do not hold, since in that case ${\rm{Bs}}\,|F|=\emptyset$ (\cite[Proposition 2.6]{P86}). 

\begin{remark}{\em
One might think that since the restriction of $F^m$ to a non-singular member $S\in |F|$ is isomorphic to $K_S^{-m}$, and since any member of $|K_S^{-m}|$ is readily seen to contain the curve $C_1$ and $\ol C_1$ by a high multiplicity, any member of $|F^m|$ would have singularity along $C_1\cup\ol C_1$. But of course this argument just shows that the restriction $Y|_S$ (where $Y\in |F^m|$) contains $C_1\cup\ol C_1$ as non-reduced components, and does not show that $Y$ itself has singularity along $C_1\cup\ol C_1$.
}
\end{remark}
%We also remark that Proposition \ref{prop:main1} also has the following implication:
%\begin{corollary}\label{cor1}
%If $n\ge 3$, any LeBrun twistor space on $n\CP^2$ cannot be a projective algebraic manifold.
%\end{corollary}

%\begin{proof}
%Suppose that a LeBrun twistor space  has a very ample line bundle $L$.
%Then $L+\ol{\sigma}^*L$ is also a very ample line bundle.
%Hence by Bertini's theorem a general element of $|L+\ol{\sigma}^*L|$ is non-singular.
%On the other hand by reality we have  $L+\ol{\sigma}^*L\simeq F^m$ for some $m\ge 1$.
%By taking self tensor product if necessary, we may suppose $m\ge 2$.
%By Corollary \ref{cor0} and Propositions \ref{prop:main1}, a general member of $|F^m|$ has singularities along the curve $C_1\cup \ol C_1$, for any $m\ge 2$.
%This is a contradiction.
%Hence  there are no very ample line bundles.
%\end{proof}

%By a theorem of Hitchin \cite[Theorem 6.1]{Hi81}, Corollary \ref{cor1} is true also for $2\CP^2$.
%But the above argument does not work 
%since Corollary \ref{cor0} and Proposition \ref{prop:main1} are valid only when $n\ge 3$ as above.

Next we investigate 
the structure of the strict transform of a general member of $ |F^m|$ in the case $m\ge n-1$.
Because structure of members of the smallest subsystem $|F^m\otimes\mathscr I^{m}|$ in the sequence \eqref{subsystem} is already clear as in Corollary \ref{cor0} (ii), we consider the case $Y\in |F^m\otimes\mathscr I^{m-k}|$ with $1\le k\le l$.
(Recall that $|F^m\otimes \mathscr I^{m-l}|=|F^m|$.)

\begin{proposition}\label{prop:main2}
Let $Z$ be any LeBrun twistor spaces on $n\CP^2$ ($n\ge 3$), $m\ge n-1$ an integer, $l=[m/(n-1)]$,  $Y$ a general member of the system $|F^m\otimes \mathscr I^{m-k}|$ with $1\le k\le l$, and $\tilde Y$ the strict transform of $\,Y$ into $\tilde Z$ as before.
Then we have:
(i) The restriction $\tilde{\Phi}|_{\tilde Y}:\tilde Y\to\Sigma_0$ is surjective and its degree is $2k$.
(ii) If $n\ge 4$, or if $n=3$ and $m>2$, then $\tilde Y$ is a (non-singular) minimal surface of general type with vanishing irregularity.
(iii) If $n=3$ and $m=2$, $\tilde Y$ is a (non-singular) K3 surface.

\end{proposition}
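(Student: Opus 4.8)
The plan is to reduce everything to the geometry of the morphism $\tilde\Phi$ on $\tilde Z$. Write $A:=\tilde\Phi^*\mathscr O_{\Sigma_0}(1,1)$ and $B:=E_1+\ol E_1$. Recall from the proof of Proposition \ref{prop:main1} that a general such $Y$ has strict transform $\tilde Y$ equal to a smooth member of $|\mu^*F^m-(m-k)(E_1+\ol E_1)|$, which by \eqref{key1} is exactly $|mA+kB|$. For (i), I would take a general fibre $f_0$ of $\tilde\Phi$, which is a smooth rational curve, and compute $\tilde Y\cdot f_0=m(A\cdot f_0)+k(B\cdot f_0)=m\cdot 0+k\cdot 2=2k$, using $A\cdot f_0=0$ (as $A$ is a pull-back) and $E_1\cdot f_0=\ol E_1\cdot f_0=1$ (as $E_1,\ol E_1$ are sections of $\tilde\Phi$). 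Since $\tilde Y$ is irreducible and meets the general fibre, the proper map $\tilde\Phi|_{\tilde Y}$ is dominant, hence surjective, and generically finite of degree $\tilde Y\cdot f_0=2k$.

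For (ii) and (iii) I would first compute $K_{\tilde Y}$ by adjunction. As $\mu$ blows up the two disjoint smooth curves $C_1,\ol C_1$, we have $K_{\tilde Z}\simeq\mu^*K_Z+B\simeq -2\mu^*F+B$, using $K_Z\simeq F^{-2}$. Combining this with the class of $\tilde Y$ and with \eqref{key1} yields
\begin{align}\notag
K_{\tilde Y}\simeq (K_{\tilde Z}+\tilde Y)|_{\tilde Y}\simeq M|_{\tilde Y},\qquad M:=(m-2)A+(k-1)B.
\end{align}
Next I would show $q(\tilde Y)=0$. From $0\to\mathscr O_{\tilde Z}(-\tilde Y)\to\mathscr O_{\tilde Z}\to\mathscr O_{\tilde Y}\to 0$ together with $h^1(\mathscr O_{\tilde Z})=h^1(\mathscr O_Z)=0$, the cohomology sequence gives an injection $H^1(\mathscr O_{\tilde Y})\hookrightarrow H^2(\mathscr O_{\tilde Z}(-\tilde Y))$, and Serre duality on $\tilde Z$ identifies the target with $H^1(\tilde Z,K_{\tilde Z}+\tilde Y)^*=H^1(\tilde Z,M)^*$. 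I would compute $H^1(M)$ by the same Leray/direct-image method as in Proposition \ref{prop:H0}: using \eqref{directim} (with exponent $k-1$) and \eqref{nb1}, it becomes a direct sum of groups $H^1(\Sigma_0,\mathscr O(a,b))$ whose twisting integers are all $\ge -1$ under the constraints $m\ge n-1$ and $1\le k\le l$; by Künneth each such group vanishes, so $H^1(M)=0$ and hence $q(\tilde Y)=0$.

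It then remains to read off the type of $\tilde Y$. In case (iii) we have $n=3,\ m=2,\ k=1$, whence $M\simeq\mathscr O_{\tilde Z}$ and $K_{\tilde Y}\simeq\mathscr O_{\tilde Y}$ is honestly trivial; together with $q(\tilde Y)=0$ this forces $\tilde Y$ to be a K3 surface. In case (ii) one always has $m\ge n-1\ge 3$, so $m-2\ge 1$, and I would prove that $K_{\tilde Y}=M|_{\tilde Y}$ is nef and big. For nef-ness, any curve $C\subset\tilde Y$ not contained in $E_1\cup\ol E_1$ has $M\cdot C\ge 0$ since $A$ is nef and $B\cdot C\ge 0$; while for $C\subset E_1$ (symmetrically $\ol E_1$) one uses that $M|_{E_1}\simeq\mathscr O_{E_1}\big(m-2-(k-1)(n-1),\,m-1-k\big)$ has both entries $\ge 0$ when $1\le k\le l$, so it is nef on $E_1\simeq\Sigma_0$. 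For bigness, since $\tilde\Phi|_{\tilde Y}$ is surjective and generically finite by (i), the class $(\tilde\Phi|_{\tilde Y})^*\mathscr O(1,1)$ is big and nef, hence so is $(m-2)A|_{\tilde Y}$; adding the effective divisor $(k-1)B|_{\tilde Y}$ preserves bigness. A nef and big canonical bundle makes $\tilde Y$ a minimal surface of general type, and the irregularity was already shown to vanish. I expect the main obstacle to be precisely the nef-ness along $E_1\cup\ol E_1$: positivity off the exceptional locus is automatic, so the entire difficulty is to control the curves cut out on $E_1$ and $\ol E_1$, and it is exactly there that the arithmetic constraint $k\le l=[m/(n-1)]$ is needed to keep both entries of $M|_{E_1}$ nonnegative.
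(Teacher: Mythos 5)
Your proof is correct. For part (i), part (iii), and the vanishing of the irregularity it follows the paper's own argument: the same adjunction computation giving $K_{\tilde Y}\simeq\bigl(\tilde{\Phi}^*\mathscr O(m-2,m-2)+(k-1)(E_1+\ol E_1)\bigr)\big|_{\tilde Y}$, the same reduction of $q(\tilde Y)=0$ to $H^1(K_{\tilde Z}+\tilde Y)=0$ via the structure sequence and Serre duality, and the same direct-image computation using \eqref{directim} and \eqref{nb1}; likewise your bigness argument for general type is essentially the paper's observation that $h^0(\nu K_{\tilde Y})$ grows quadratically. Where you genuinely diverge is minimality in case (ii). The paper excludes $(-1)$-curves by hand: it notes $K_{\tilde Y}\cdot D\ge 0$ for curves not inside $E_1\cup\ol E_1$, and for curves inside $E_1$ it runs a case analysis on the bidegree $(a,b)=(m+k(1-n),m-k)$ of $\tilde Y|_{E_1}$ --- if $a>1$ the curve has positive genus; if $a=0$ or $a=1$ one computes the self-intersections $\mathscr O(0,1)\cdot N_1=1-n$, resp. $-l(n-1)(n-2)-n$, inside $\tilde Y$ --- and this forces $Y$ to be \emph{sufficiently general}, so that $\tilde Y|_{E_1}$ is itself a general member of its system on $E_1$. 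You instead prove that $K_{\tilde Y}$ is nef, by checking that your $M$ restricts to $E_1$ as $\mathscr O\bigl(m-2-(k-1)(n-1),\,m-k-1\bigr)$ with both entries nonnegative; this is literally the same arithmetic ($m-2-(k-1)(n-1)\ge n+q-3\ge 0$ and $m-k-1\ge l(n-2)+q-1\ge 0$) that the paper, and you, already used for the $H^1$-vanishing, and nefness kills $(-1)$-curves at once. Your route is cleaner and marginally stronger: one computation serves two purposes, no genericity of $Y$ is needed beyond the smoothness and irreducibility of $\tilde Y$ supplied by Proposition \ref{prop:main1}, and the case analysis disappears. What the paper's more explicit route buys is concrete information on the curves $\tilde Y\cap E_1$ (their bidegrees and self-intersections), which is what one would use, for instance, to compute $K_{\tilde Y}^2$ as mentioned after the proposition. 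One trivial slip: your justification ``$m\ge n-1\ge 3$'' for $m-2\ge 1$ only covers $n\ge 4$; for $n=3$ one instead uses the hypothesis $m>2$, and in both subcases $m\ge 3$ indeed holds.
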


\begin{proof}
(i) is immediate from $\tilde Y\sim \tilde{\Phi}^*\mathscr O(m,m)+k(E_1+\ol E_1)$, recalling
that $E_1$ and $\ol E_1$ are sections of $\tilde{\Phi}$ and that the image of $\tilde\Phi^*|\mathscr O(m,m)|$ is strictly smaller than $|F^m|$ when $m\ge n-1$ and $1\le k\le l$ by Proposition \ref{prop:H0} or Proposition \ref{prop:main1}.
For (ii) and (iii),
by adjunction formula, we have $K_{\tilde Y}
\simeq 
(K_{\tilde Z}+\tilde Y)|_{\tilde Y}$.
Further we have
\begin{align}\notag
K_{\tilde Z}+\tilde Y
&\simeq
\mu^*K_Z+(E_1+\ol E_1)+\tilde Y\\
\notag&\simeq
\mu^*(F^{-2})+(E_1+\ol E_1)+\mu^*F^m-(m-k)(E_1+\ol E_1)\\
\notag&\simeq
\mu^*(F^{m-2})-(m-k-1)(E_1+\ol E_1)\\
&\simeq
\tilde{\Phi}^*\mathscr O(m-2,m-2)+(k-1)(E_1+\ol E_1)\qquad(\text{by \eqref{key1}}).
\label{can1}
\end{align}
Therefore, since we have $m-2>0$  if $n\ge 4$, or $m>2$ if $n=3$, and since $l-1\ge 0$,  we obtain that $h^0(\nu K_{\tilde Y})$ grows quadratically as a function of $\nu$.
Hence  $\tilde Y$ is a surface of general type.
On the other hand if $n=3$ and $m=2$, we have $l=1$ and hence \eqref{can1} means that $K_{\tilde Y}$ is trivial. 
Next we prove that $H^1(\mathscr O_{\tilde Y})=0$ for both cases of (ii) and (iii).
For this, by the standard exact sequence 
$0\lras \mathscr O_{\tilde Z}(-\tilde Y)
\lras \mathscr O_{\tilde Z}
\lras  \mathscr O_{\tilde Y}
\lras 0$, it suffices to show $H^2(\mathscr O_{\tilde Z}(-\tilde Y)
)=0$.
By duality, the last space is the dual of $H^1(K_{\tilde Z}+\tilde{Y})$.
Moreover, by \eqref{can1}, we obtain, with the aid of \eqref{directim}
\begin{align}
H^1(K_{\tilde Z}+\tilde{Y})
&\simeq
H^1\left(\mathscr O(m-2,m-2)\otimes(
\mathscr O\oplus
(N_1\oplus \ol N_1)
\oplus
\cdots
\oplus
(N_1^{k-1}\oplus \ol N_1^{k-1})
\right).
\end{align}
As $N_1\simeq\mathscr O(1-n,-1)$, in order to prove $H^1(K_{\tilde Z}+\tilde Y)=0$, it is enough to show that, if we write $\mathscr O(m-2,m-2)\otimes N_1^{k-1}\simeq \mathscr O(a,b)$, then $a\ge 0$ and $b\ge 0$ hold.
For this, putting $m=l(n-1)+q$ with $0\le q<n-1$ as before, we compute
\begin{align}\notag
a= m-2+(k-1)(1-n)
&\ge
l(n-1)+q-2+(l-1)(1-n)
=n+q-3\ge 0,
\end{align}
and
\begin{align}
b=m-k-1
\ge
 l(n-1)+q-l-1
 =
 l(n-2)+q-1\ge q\ge 0.
\end{align}
Hence we obtain $H^1(K_{\tilde Z}+\tilde Y)=0$, so that $H^1(\mathscr O_{\tilde Y})=0$.
In particular, in the situation of (iii),  $\tilde Y$ is a $K3$ surface.

To complete a proof of Proposition \ref{prop:main2},
it remains to prove that $\tilde Y$ does not contain a $(-1)$-curve, in the situation of (ii).
Since $(\tilde{\Phi}^*\mathscr O(m-2,m-2))\cdot D\ge 0$ for any curve $D$ on $\tilde Y$, 
by \eqref{can1} we have  $K_{\tilde Y}\cdot D\ge 0$ for any curve $D\subset \tilde Y$ unless $D\subset E_1\cup \ol E_1$.
So to prove that there is no $(-1)$-curve, it suffices to show that if we take a sufficiently general $Y$,  then the restriction
$\tilde Y|_{E_1}$  does not contain  a $(-1)$-curve.
For this, by  \eqref{rest1} we have $\tilde Y|_{E_1}\in |\mathscr O(m+k(1-n),m-k)|$.
Further, we have 
\begin{align}
m+k(1-n)
=l(n-1)+q+k(1-n)=(l-k)(n-1)+q\ge q
\end{align}
and 
\begin{align}
m-k
=
l(n-1)+q-k\ge l(n-1)+q-l=l(n-2)+q\ge 2.
\end{align}
Therefore (by the surjectivity of the restriction map in the exact sequence \eqref{key5}),
if we take sufficiently general $ Y\in |F^m\otimes \mathscr I^{m-k}|$, the curve $\tilde Y|_{E_1}$ is a non-singular irreducible curve of bidegree $(a,b)$ with $a\ge q$ and $b\ge 2$. 
Hence if $q>1$, $a>1$ follows and hence $\tilde Y|_{E_1}$ cannot be a rational curve for sufficiently general $Y$.
On the other hand, if $a=0$, we obtain $l=k$ and $q=0$, and hence $\tilde Y|_{E_1}$ becomes a curve of bidegree $(0,l(n-2))$, so that it is a union of $l(n-2)$ smooth rational curves of bidegree $(0,1)$, for sufficiently general $Y$.
Now since $\tilde Y$ intersects $E_1$ transversally along these rational curves, 
the self-intersection numbers of these curves in $\tilde Y$ is 
$$
(0,1)\cdot N_{E_1/\tilde Z}=\mathscr O(0,1)\cdot \mathscr O(1-n,-1)=1-n<-2.
$$
Therefore any of  the $l(n-2)$ rational curves cannot be a $(-1)$ curve on $\tilde Y$.
Finally, if $a=1$, $l=k$ and $q=1$ follow and the curve $\tilde Y|_{E_1}$ becomes a non-singular rational curve of bidegree $(1, l(n-2)+1)$. However we have
\begin{align}\notag
\mathscr O(1, l(n-2)+1)\cdot N_{E_1/\tilde Z}&=
-l(n-1)(n-2)-n\le -1\cdot 2\cdot 3-4=-10.
\end{align}
This means that the self-intersection number of  $\tilde Y|_{E_1}$ in $\tilde Y$ is less than $-9$.
Hence it cannot be a $(-1)$-curve, too.
By the same reason, any curve $D\subset \tilde Y\cap \ol E_1$ cannot be a $(-1)$-curve for general $Y\in |F\otimes\mathscr I^{m-k}|$.
Thus we have proved that $\tilde Y$ does not contain a $(-1)$-curve.
Namely it is a minimal surface.
\end{proof}

%When $k=1$ (namely when  $Y\in |F^m\otimes \mathscr I^{m-1}|$) the restriction $\tilde\Phi|_{\tilde Y}\to \Sigma_0$ becomes generically $2:1$. So in this case it is easy to compute other basic invariants of $\tilde Y$:

%\begin{proposition}
%Let $n\ge 4$ and $m\ge n-1$, $Y\in |F^m\otimes\mathscr I^{m-1}|$ a general member, and $
%\tilde Y$ the strict transform, which becomes non-singular minimal surface of general type by Propositions \ref{prop:main1} and  \ref{prop:main2}. Then we have:
%\begin{align}
%K_{\tilde Y}^2=4(m-2)^2,\quad p_g(\tilde Y)=(m-1)^2.
%\end{align}
%\end{proposition}

%It is also possible to compute $K_{\tilde Y}^2$ for any $\tilde Y$ in Proposition \ref{prop:main2} (ii).

It is also possible to compute $K_{\tilde Y}^2$ for $\tilde Y$ in Proposition \ref{prop:main2} (ii).

Finally we show the following result about the rational map associated to $|F^m|$:

\begin{proposition}
Let $Z$ be any LeBrun twistor space on $n\CP^2$ with $n\ge 3$, and $\Phi_m$ the rational map associated to the linear system $|F^m|$ on $Z$. Then we have the following.
(i) If $\,1\le m\le n-2$, the rational map  $\Phi_m$ factors as $g_m\circ\Phi_1$, where $g_m$ is a holomorphic map from $\Sigma_0$ associated to the linear system $|\mathscr O(m,m)|$.
(In particular, the image $\Phi_m(Z)$ is biholomorphic to $\Sigma_0$.)
(ii) If $m\ge n-1$, the map $\Phi_m$ is birational over its image.
\end{proposition}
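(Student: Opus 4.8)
The plan is to handle the two cases by reducing everything to the morphism $\tilde\Phi:\tilde Z\to\Sigma_0$ of the basic diagram \eqref{cd2}. For (i), when $m\le n-2$ the isomorphism \eqref{H0-01} (equivalently Corollary \ref{cor0}) says that every section of $F^m$ is a pull-back, i.e.\ $H^0(Z,F^m)=\tilde\Phi^*H^0(\Sigma_0,\mathscr O(m,m))$ under the identification through $\mu$. Hence $\Phi_m$ is exactly the map obtained by pulling back the complete system $|\mathscr O(m,m)|$ along $\Phi_1$, so $\Phi_m=g_m\circ\Phi_1$ with $g_m$ attached to $|\mathscr O(m,m)|$. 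Since $\mathscr O(m,m)$ is very ample on $\Sigma_0=\CP^1\times\CP^1$ for every $m\ge1$, the map $g_m$ is an embedding, so $\Phi_m(Z)=g_m(\Phi_1(Z))=g_m(\Sigma_0)\simeq\Sigma_0$. This disposes of (i) with no further work.

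For (ii) I would first pass to $\tilde Z$: as $\mu$ is birational, $\Phi_m$ is birational onto its image iff $\tilde\Phi_m:=\Phi_m\circ\mu$ is. By Proposition \ref{prop:main1}(ii) a general member of $|F^m|$ has multiplicity exactly $m-l$ along $C_1\cup\ol C_1$, so $(m-l)(E_1+\ol E_1)$ is the fixed part of $|\mu^*F^m|$; removing it leaves the moving system $|\mathscr A|$ with $\mathscr A:=\tilde\Phi^*\mathscr O(m,m)+l(E_1+\ol E_1)$ (by \eqref{key1}), which is base-point-free by the computation in the proof of Proposition \ref{prop:main1}. The sub-system $\tilde\Phi^*|\mathscr O(m,m)|\subset|\mathscr A|$ realizes $g_m\circ\tilde\Phi$ as a linear projection of $\tilde\Phi_m$, and since $g_m$ is an embedding, $\tilde\Phi_m$ already separates points lying in different fibres of $\tilde\Phi$. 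Thus birationality reduces to the single claim that $\tilde\Phi_m$ is injective on a generic fibre $G_s$ of $\tilde\Phi$.

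Because $E_1$ and $\ol E_1$ are sections of $\tilde\Phi$, a general fibre $G_s\simeq\CP^1$ meets each of them in one point while $\tilde\Phi^*\mathscr O(m,m)$ is trivial on $G_s$, whence $\mathscr A|_{G_s}\simeq\mathscr O_{\CP^1}(2l)$. It then suffices to prove that the restriction $H^0(\tilde Z,\mathscr A)\to H^0(G_s,\mathscr O(2l))$ is surjective, for then $\tilde\Phi_m|_{G_s}$ is given by the complete system $|\mathscr O(2l)|$ on $\CP^1$, which is an embedding since $2l\ge2$. To get surjectivity I would use the direct image: by \eqref{key1} and \eqref{directim}, $\tilde\Phi_*\mathscr A\simeq\mathscr O(m,m)\oplus\bigoplus_{1\le j\le l}\big(\mathscr O(m,m)N_1^{\,j}\oplus\mathscr O(m,m)\ol N_1^{\,j}\big)$, and by \eqref{nb1} each summand is $\mathscr O(m-j(n-1),m-j)$ or its conjugate, whose two bidegrees are both $\ge0$ for $1\le j\le l=[m/(n-1)]$. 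Hence $\tilde\Phi_*\mathscr A$ is globally generated, so $H^0(\Sigma_0,\tilde\Phi_*\mathscr A)\to(\tilde\Phi_*\mathscr A)\otimes\kappa(s)$ is onto, while cohomology-and-base-change (using $H^1(G_s,\mathscr O(2l))=0$) identifies $(\tilde\Phi_*\mathscr A)\otimes\kappa(s)$ with $H^0(G_s,\mathscr O(2l))$. Composing gives the required surjectivity, so $\tilde\Phi_m$ is injective on $G_s$; together with fibre-separation this makes $\tilde\Phi_m$ generically injective, hence birational.

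The step I expect to be the main obstacle is precisely this surjectivity onto the generic fibre, i.e.\ controlling $\tilde\Phi_m$ in the fibre direction of $\tilde\Phi$: one must simultaneously know that $\tilde\Phi_*\mathscr A$ is globally generated and that base change holds, for which I would first isolate the dense open set of $s$ whose fibre $G_s$ is a single smooth rational curve (the remaining fibres being strings of rational curves, over which $\mathscr A|_{G_s}$ and the base-change behaviour need not be so clean). Everything else --- the reduction to $\tilde Z$, the fixed-part computation, and the projection giving fibre-separation --- is formal once Propositions \ref{prop:H0} and \ref{prop:main1} are in hand.
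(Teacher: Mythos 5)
Your proof of (i) is exactly the paper's (which just cites \eqref{H0-01}/Corollary \ref{cor0} and the very ampleness of $\mathscr O(m,m)$ implicitly), and your proof of (ii) follows the same skeleton as the paper's: pass to $\tilde Z$, strip off the fixed part $(m-l)(E_1+\ol E_1)$ so that the moving part is $|\mathscr A|$ with $\mathscr A=\tilde{\Phi}^*\mathscr O(m,m)+l(E_1+\ol E_1)$, use the pulled-back subsystem to separate distinct fibres of $\tilde{\Phi}$, and observe that $\mathscr A$ restricts to $\mathscr O(2l)$ on a general fibre. The difference lies in the last step. The paper, after recording that the strict transform of a general member has degree $2l$ over $\Sigma_0$, simply asserts that ``recalling that $\tilde{\Phi}$ is originally associated to $|F|$, it follows that the system induces a birational map''; no argument is given that the system separates points \emph{inside} a general fibre, and degree alone does not force this: a base-point-free proper subsystem of $|\mathscr O_{\CP^1}(2l)|$, such as the pencil spanned by $u^{2l}+v^{2l}$ and $u^{l}v^{l}$, defines a non-injective map. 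You supply precisely the missing argument: by the projection formula and \eqref{directim}, $\tilde{\Phi}_*\mathscr A$ is a direct sum of line bundles $\mathscr O(m-k(n-1),m-k)$ and their conjugates with both bidegrees non-negative for $0\le k\le l$, hence globally generated; combined with cohomology-and-base-change over the locus of smooth fibres (where $H^1(G_s,\mathscr O(2l))=0$, and where flatness is automatic), this gives surjectivity of $H^0(\tilde Z,\mathscr A)\to H^0(G_s,\mathscr O(2l))$, so $\Phi_m$ restricted to a general fibre is given by the \emph{complete} system $|\mathscr O(2l)|$ and is injective. In short, your route is the paper's route made rigorous, with one genuinely additional ingredient (global generation of the direct image plus base change) that the paper's own proof does not contain; what it buys is a complete justification of the fibrewise injectivity that the paper leaves implicit, at the modest cost of re-invoking the direct-image computation from Proposition \ref{prop:H0}.
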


\begin{proof}
(i) is obvious from \eqref{H0-01} or Corollary \ref{cor0}.
%For (ii), let $t\in H^0(F)$ be any non-zero element.
%Then since we can define an injection $\otimes t^{m-n+1}:H^0((n-1)F)\to H^0(F^m)$ that induces a surjection $\pi:\mathbb P H^0(F^m)^*\to \mathbb P H^0((n-1)F)^*$ satisfying $\Phi_{n-1}=\pi\circ\Phi_m$, it suffices to show the claim for the case $m=n-1$.
For (ii), by Propositions \ref{prop:main1} and  \ref{prop:main2}, the strict transform $\tilde Y$ of a general member $Y\in |F^{n-1}|$ is a non-singular surface in $\tilde Z$ whose restriction of $\tilde{\Phi}$ is a surjection of degree $2l$.
Therefore for a general fiber $f$ of $\tilde{\Phi}:Z\to \Sigma_0$ (which is non-singular rational curve), $\mathscr O(\tilde Y)|_f\simeq\mathscr O(2l)$.
Therefore, recalling that $\tilde{\Phi}$ is originally associated to $|F|$, it follows that the system $|\tilde Y|$ on $\tilde Z$ induces a birational map.
Hence the same is true for the system $|Y|=|F^m|$ on $Z$, as required.
\end{proof}

\vspace{3mm}

\end{document}